\journalname{Graphs and Combinatorics}
\begin{document}

\title{On the number of Hamiltonian cycles and polynomial invariants of graphs%\thanks{Grants or other notes
%about the article that should go on the front page should be
%placed here. General acknowledgments should be placed at the end of the article.}
}

%\titlerunning{Short form of title}        % if too long for running head

\author{Yi Bo
}

%\authorrunning{Short form of author list} % if too long for running head

\institute{Yi Bo \at
              Department of Mathematics  \\Harbin Institute of Technology \\ Harbin 150001, P.R.CHINA \\
              \email{1427341332@qq.com}           %  \\
}

\date{Received: date / Accepted: date}
% The correct dates will be entered by the editor

\maketitle

\begin{abstract}
Firstly, for a general graph, we find a recursion formula on the number of Hamiltonian cycles and one on cycles. By this result, we give some new polynomial invariants. Secondly, we give a condition to tell whether a polynomial defined by recursion on edges is a invariant. Then, we give an generaliztion of the Tutte polynomial. Finally, We have a try on distinguishing different graphs by using these polynomials.
\keywords{Graph \and Hamiltonian cycle \and Polynomial invariant \and Tutte polynomial}
% \PACS{PACS code1 \and PACS code2 \and more}
 \subclass{Primary 05C45; Secondary 05C30}
 %\subclass{Primary 05C31 \and 05C45; Secondary 05C30} %if use MSC(2010)
\end{abstract}

\section{Introduction}
            The Hamiltonian cycle problem is a famous open problem in graph theory. There are two topics to study this problem, existence and enumeration. Existence is to say that, if a graph have some properties, there must be a Hamiltonian cycle in it. Enumeration is to say (about) how many Hamiltonian cycles are there in a graph with some properties. To know whether there is a Hamiltonian cycle in a graph is NP-complete and to know how many is \#P-complete, so it's difficult to study this question for a general graph. We will study enumeration of Hamiltonian cycles in this article.

            Polynomial invariants is a good tool to study a mathematical object (especially in knot theory and graph theory), because there are many invariants contained in the polynomials. For example, the chromatic polynomial which is defined by George David Birkhoff in \cite{P} to attack the four color theorem is very useful.

            The Tutte polynomial is also one of the most important and beautiful polynomial invariants of graphs. Its properties and relationships with other polynomial invariants can be found in \cite{GP,GT,dxs}.

            The following is a simple introduction of notation in graph theory (cf.\cite{EC}) to be used in this article. More details about graph theory is in \cite{GT}.
    \subsection{Preliminaries}
            A finite graph is a triple $G=(V,E,\varphi{})$, where $V$ is a finite set of vertices and denoted by $V(G)$ (sometimes short for $V$), $E$ is a finite set of edges and denoted by $E(G)$, and function $\varphi{}:$ $E\rightarrow{}V\ \&\ V$, where $V\ \&\ V$ is the set of unordered pair of $V$. If both $V$ and $E$ are empty sets, $G$ is an empty graph and denoted by $\phi{}$. We suppose a graph is nonempty unless we state it explicitly. If $\varphi{}(e)=\{u,v\}$, we say that $u$ and $v$ are adjacent, and that $u$ and $e$, $v$ and $e$ are incident. If $u=v$, we call $e$ is a loop. We denote a graph with only one vertex and $n$ loops by $K_{1}^{n}$. The degree of $v$ is the number of edges incident with $v$ (loops count twice), which is denoted by $deg(v)$. We denote $e$ by $uv$, although $\varphi{}$ may not be injective, when we only focus on $e$.

            A sequence $v_{0}e_{1}v_{1}e_{2}...e_{n}v_{n}$, where $n\geq{}0$, $v_{i}\in{}V$ and $e_{i}\in{}E$, is called a $v_{0}v_{n}$-walk of $G$, if any two consecutive terms are incident. A walk is a path if all the vertices are different; a cycle if $n\geq{}1$ and all the vertices and edges are different except for $v_{0}=v_{n}$. We denote the number of cycles of $G$ by $c(G)$. We call $u$ and $v$ are connected if there is a $uv$-walk in $G$. An equivalence class of connected vertices is called a connected component. We denote the number of connected components of $G$ by $k(G)$. A graph $G$ is connected, if $k(G)=1$. By $G=G_{1}+G_{2}$ we denote $G$ is the disjoint union of $G_{1}$ and $G_{2}$, i.e., $k(G)\geq{}2$ (because we suppose they are nonempty without statement).

            A graph $H=(V_{2},E_{2},\varphi{}_{2})$ is called a subgraph of $G=(V_{1},E_{1},\varphi{}_{1})$ ,which is denoted by $H\subseteq{}G$, if $V_{2}\subseteq{}V_{1}$, $E_{2}\subseteq{}E_{1}$ and $\varphi{}_{1}|_{E_{2}}=\varphi{}_{2}$. If $V_{2}=V_{1}$, we call $H$ is a spanning subgraph of $G$, denoted by $H\subseteq\subseteq{}G$. A cycle is called a Hamiltonian cycle, if it's a spanning subgraph. We denote the number of Hamiltonian cycles of $G$ by $h(G)$. A subgraph of $G$ is called a $j$-matching, if it have $2j$ vertices and every vertex have degree $1$. We denote the number of $j$-matching of $G$ by $m_{j}(G)$.

            For an edge $uv$ of $G$, we denote edge deletion of $uv$ by $G-uv$; edge contraction of $uv$ by $G\ /\ uv$, i.e., merging $u$ and $v$ by deleting (contracting) $uv$; vertex deletion of $u$ by $G-u$ (also deleting the edges incident with $u$), and similarly we have $G-v$ and $G-u-v$. We suppose $uv\in{}E$ without statement, when we use this operations.

            We denote the coefficient of $x^{k}$ in $f(x)$ by $[x^{k}]f(x)$, and the number of elements of set $A$ by $\#A$.
    \subsection{Outline}
            The following are the outline of the article.

            Section 2: Theorem 1 give a recursion formula on the number of Hamiltonian cycles, and by this result we define a polynomial. Lemma 2 give a recursion formula on the number of cycles, and by this result we define cycle polynomial.

            Section 3: Definition 6 merge the cycle polynomial and the matching polynomial. Theorem 4 give a sufficient and necessary condition to tell whether a polynomial defined by recursion on edges is an invariant of graphs.

            Section 4: Definition 8 give an generaliztion of the Tutte polynomial, and Proposition 1 give some of its properties.

            Section 5: We have some discussion on distinguishing different graphs by using polynomials.

            In this paper, Theorem 1 and Theorem 4 are the main theorems.
\section{On Hamiltonian cycles and cycles}
    \subsection{Hamiltonian cycles}
            We have a beautiful recursion formula on the number of Hamiltonian cycles.
        \begin{theorem}
            For a graph $G:$

            $(1)h(G)=h(G-uv)+h(G\ /\ uv)-h(G-u)-h(G-v)$, $u\neq{}v;$

            $(2)h(G_{1}+G_{2})=0;$

            $(3)h(K_{1}^{n})=n.$
        \end{theorem}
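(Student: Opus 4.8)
The plan is to prove the three parts in turn; (2) and (3) are immediate, while (1) carries the content. For (2), a Hamiltonian cycle is in particular a spanning \emph{connected} subgraph, whereas $G_{1}+G_{2}$ is disconnected and hence has no spanning connected subgraph; so $h(G_{1}+G_{2})=0$. For (3), the only cycles of $K_{1}^{n}$ are the $n$ loops at its unique vertex (each loop $vev$ is a cycle of length $1$), and every one of them is spanning, so $h(K_{1}^{n})=n$.

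For (1), fix the edge $e=uv$ with $u\neq v$ (so $G-e=G-uv$), and let $N$ be the number of Hamiltonian cycles of $G$ containing $e$. The Hamiltonian cycles of $G$ that avoid $e$ are precisely the spanning cycle subgraphs of $G-e$, i.e.\ the Hamiltonian cycles of $G-uv$, so $h(G)=h(G-uv)+N$. Deleting $e$ from a Hamiltonian cycle that contains it yields a spanning $uv$-path of $G-uv$, and adding $e$ back reverses this; hence $N$ also equals the number of spanning $uv$-paths of $G-uv$. It therefore suffices to establish the identity $h(G/uv)=h(G-u)+h(G-v)+N$, which is the heart of the matter.

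To prove this, write $G'=G/uv$ and let $w$ be the vertex obtained by identifying $u$ and $v$, so that $E(G')$ is canonically identified with $E(G)\setminus\{e\}$. I would first treat the generic case $|V(G)|\ge 3$. Every spanning cycle $C$ of $G'$ meets $w$ and hence uses exactly two (non-loop) edges there; pull $C$ back to the subgraph of $G$ on the corresponding edge set, so that each of those two edges becomes incident in $G$ to exactly one of $u,v$. Split into three cases: if both pulled-back edges are incident to $u$, the pullback is a spanning cycle of $G-v$; if both to $v$, a spanning cycle of $G-u$; if one to $u$ and one to $v$, a spanning $uv$-path of $G-uv$. Conversely, contracting $e$ in a Hamiltonian cycle of $G-v$ or of $G-u$, and identifying the endpoints in a spanning $uv$-path of $G-uv$, each produce a spanning cycle of $G'$ of the corresponding type. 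All these maps act as the identity on edge sets, so they are injective and mutually inverse, and the three resulting families of spanning cycles of $G'$ are pairwise disjoint because every spanning cycle of $G'$ must meet $w$. Summing the three cases gives $h(G/uv)=h(G-v)+h(G-u)+N$, and combining with $h(G)=h(G-uv)+N$ yields (1).

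The main obstacle is the degenerate case $V(G)=\{u,v\}$: then $G'$ has only the vertex $w$, and its Hamiltonian cycles are loops at $w$, so the ``two edges at $w$'' picture must be replaced by tracking how each loop at $w$ pulls back — to a loop at $u$, a loop at $v$, or one of the remaining $uv$-edges of $G$ — and one checks that the same three-way split persists. Alternatively, one can simply verify (1) directly here: if $G$ has $p$ parallel $uv$-edges, then $h(G)=\binom{p}{2}$, $h(G-uv)=\binom{p-1}{2}$ and $h(G/uv)-h(G-u)-h(G-v)=p-1$, so (1) reduces to Pascal's identity $\binom{p}{2}=\binom{p-1}{2}+\binom{p-1}{1}$. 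My plan is to run the general argument for $|V(G)|\ge 3$ and dispose of $|V(G)|=2$ by this computation.
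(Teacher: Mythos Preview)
Your argument is correct and follows the same idea as the paper: partition the Hamiltonian cycles of $G/uv$ according to how the two edges at the contracted vertex $w$ pull back to $G$ (both to $u$, both to $v$, or one to each), yielding $h(G/uv)=h(G-v)+h(G-u)+N$ where $N$ counts Hamiltonian cycles of $G$ through $uv$. Your write-up is in fact more careful than the paper's, which does not isolate the $|V(G)|=2$ base case; note only that in that base case $G$ may also carry loops at $u$ and at $v$, but these cancel between $h(G/uv)$ and $h(G-u)+h(G-v)$, so your Pascal computation still goes through.
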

            \begin{proof}
                For $uv\in{}E$, $u\neq{}v$, $h(G-uv)$, obviously, equals the number of Hamiltonian cycles which don't include the edge $uv$. And for the pre-image (of the operation, the edge contraction of $uv$) of a cycle in $G\ /\ uv$ (that is to say, for a cycle in $G\ /\ uv$, you can extend the $uv$ back again to see from $G$ how we get the cycle in $G\ /\ uv$ by the edge contraction of $uv$), there are $3$ situations:

                $S_{1}$: there is $1$ edge incident with $u$ and $v$ respectively;

                $S_{2}$: there are $2$ edges incident with $u$;

                $S_{3}$: there are $2$ edges incident with $v$.

                The number of (cycles in $G\ /\ uv$ in) $S_{1}$ (short for $\#S_{1}$) equals the number of Hamiltonian cycles which include the edge $uv$. And in fact, $\#S_{2}=h(G-v)$, i.e., the number of cycles which include all the vertices of $G$ expect $v$. And similarly we have $\#S_{3}=h(G-u)$. And obviously, $\#S_{1}+\#S_{2}+\#S_{3}=h(G\ /\ uv)$. So the number of Hamiltonian cycles which include the edge $uv$ is $h(G\ /\ uv)-h(G-u)-h(G-v)$.

                So we have $h(G)=h(G-uv)+h(G\ /\ uv)-h(G-u)-h(G-v)$, $u\neq{}v$.

                Obviously for $(2)$ and $(3)$, and by using $(1)$ can make the size of a graph down, so we can get $h(G)$ by the recursion.
            \end{proof}

            It's so beautiful that we think someone may have got and proved it. But we didn't find any similar result. If you know any similar result, please give us a email. Thank you very much.

            In the proof above, thinking the pre-image of edge contraction is a key point, and so also will be in the following.
        \begin{definition}
            We call $H$ a $k$-components Hamiltonian cycle(or we say $2$-factor with $k$-components) of $G$, if $H\subseteq\subseteq{}G$, $k(H)=k$, and $\forall{}v\in{}V(H)$, $deg(v)=2$. We denote the number of $k$-components Hamiltonian cycles of $G$ by $h_{k}(G)$.
        \end{definition}

            Be careful about the ``cycle'' above. We just so call it, although it's not really a cycle when $k\geq{}2$. Similar to the proof of Theorem 1, we have:
        \begin{lemma}
            For a graph $G:$

            $(1)h_{k}(G)=h_{k}(G-uv)+h_{k}(G\ /\ uv)-h_{k}(G-u)-h_{k}(G-v)$, $u\neq{}v;$

            $(2)h_{k}(G_{1}+G_{2})=\sum_{i}h_{i}(G_{1})h_{k-i}(G_{2});$

            $(3)h_{1}(K_{1}^{n})=n$; $h_{k}(K_{1}^{n})=0$, $k\neq{}1.$
        \end{lemma}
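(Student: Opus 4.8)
The plan is to imitate the proof of Theorem~1 line by line for part~(1), carrying along the extra bookkeeping of the number of connected components, and to deduce parts~(2) and~(3) straight from the definition of $h_{k}$.

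For (1), fix $uv\in E(G)$ with $u\neq v$ and split the set of $k$-component $2$-factors $H$ of $G$ according to whether $uv\in E(H)$. The subgraphs with $uv\notin E(H)$ are exactly the $k$-component $2$-factors of $G-uv$, giving the term $h_{k}(G-uv)$. For the subgraphs with $uv\in E(H)$, I would re-use the ``pre-image of an edge contraction'' device from Theorem~1: contracting $uv$ inside such an $H$ yields a spanning subgraph of $G\ /\ uv$ in which every vertex still has degree $2$ (the merged vertex $w$ has degree $2+2-2=2$) and the number of components is unchanged (contracting an edge whose ends lie in the same component does not alter $k$); moreover at $w$ one of the two incident half-edges comes from $u$ and the other from $v$, which is situation $S_{1}$. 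Conversely each member of $S_{1}$ lifts back uniquely by splitting $w$ into $u,v$ and re-inserting the edge $uv$, so $\#S_{1}$ equals the number of $k$-component $2$-factors of $G$ using $uv$. Exactly as in Theorem~1, the members of $S_{2}$ (both half-edges at $w$ coming from $u$) correspond bijectively to the $k$-component $2$-factors of $G-v$, those of $S_{3}$ to the $k$-component $2$-factors of $G-u$, and $S_{1},S_{2},S_{3}$ partition all $k$-component $2$-factors of $G\ /\ uv$; hence $\#S_{1}=h_{k}(G\ /\ uv)-h_{k}(G-u)-h_{k}(G-v)$, and the identity in (1) follows.

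For (2), observe that a spanning subgraph $H$ of $G_{1}+G_{2}$ with all degrees equal to $2$ uses no edge between the two parts, so it restricts to a $2$-factor $H_{i}$ of $G_{i}$ for $i=1,2$, with $k(H)=k(H_{1})+k(H_{2})$; conversely every such pair $(H_{1},H_{2})$ glues to a $2$-factor of $G_{1}+G_{2}$. Grouping by the split $i+(k-i)=k$ of the component count gives $h_{k}(G_{1}+G_{2})=\sum_{i}h_{i}(G_{1})h_{k-i}(G_{2})$. For (3), in $K_{1}^{n}$ the unique vertex must be covered with degree exactly $2$, which (since a loop counts twice) forces choosing exactly one of the $n$ loops; the resulting subgraph is always connected, so $h_{1}(K_{1}^{n})=n$ and $h_{k}(K_{1}^{n})=0$ for $k\neq 1$. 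As with Theorem~1, since (1) strictly decreases the number of non-loop edges, the three clauses together let one compute $h_{k}$ on any graph.

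The only delicate point, and the one I would be most careful about, is the bijective trichotomy $S_{1}\cup S_{2}\cup S_{3}$ in the presence of loops or parallel edges at $u$ and $v$: a parallel $uv$-edge of $G$ becomes a loop at $w$ in $G\ /\ uv$, a loop at $u$ stays a loop at $w$, and so on, so one should read ``edge incident with $u$'' as ``half-edge at $u$''. With that reading, every half-edge at $w$ is unambiguously inherited from a half-edge at $u$ or at $v$, the three cases are genuinely exclusive and exhaustive, and the component count is preserved in each direction of every bijection above; the remainder of the argument is formal.
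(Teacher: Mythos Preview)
Your proposal is correct and is precisely the argument the paper has in mind: the paper does not write out a separate proof of this lemma but simply says ``Similar to the proof of Theorem~1, we have:'', and your write-up is exactly that imitation, with the extra bookkeeping that the component count is preserved under each of the three bijections $S_{1},S_{2},S_{3}$. Your careful remark about reading ``edge incident with $u$'' as ``half-edge at $u$'' in the presence of loops and parallel edges is a useful clarification that the paper leaves implicit.
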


            Then let's rewrite Lemma 1 in a beautiful way.
        \begin{definition}
            For a graph $G$, $H(G,s)$ (sometimes short for $H(G)$) is called the Hamiltonian cycle polynomial, which is got from:

            $(1)H(G)=H(G-uv)+H(G\ /\ uv)-H(G-u)-H(G-v)$, $u\neq{}v;$

            $(2)H(G_{1}+G_{2})=H(G_{1})H(G_{2});$

            $(3)H(K_{1}^{n})=ns.$
        \end{definition}

            Because of Lemma 1, Definition 2 is well-defined, and we have:
        \begin{theorem}
            For a graph $G$, $[s^{k}]H(G,s)=h_{k}(G).$
        \end{theorem}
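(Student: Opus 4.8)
The plan is to show that the polynomial
\[
\widetilde H(G,s)\;:=\;\sum_{k\ge 1} h_k(G)\,s^{k}
\]
satisfies the three defining relations $(1)$–$(3)$ of Definition 2. Since $H(G,s)$ is, by construction, the unique polynomial assignment satisfying those relations — this is precisely what it means for Definition 2 to be well-defined, which the excerpt notes follows from Lemma 1 — it will follow that $\widetilde H=H$, i.e.\ $[s^{k}]H(G,s)=h_k(G)$.

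For relation $(1)$, I would simply multiply the recursion of Lemma 1$(1)$ by $s^{k}$ and sum over $k$, obtaining at once
\[
\widetilde H(G)=\widetilde H(G-uv)+\widetilde H(G\ /\ uv)-\widetilde H(G-u)-\widetilde H(G-v),\qquad u\ne v .
\]
For relation $(2)$, Lemma 1$(2)$ gives $h_k(G_1+G_2)=\sum_i h_i(G_1)h_{k-i}(G_2)$, and the right-hand side is exactly the coefficient of $s^{k}$ in the product $\widetilde H(G_1)\widetilde H(G_2)$ (Cauchy product); hence $\widetilde H(G_1+G_2)=\widetilde H(G_1)\widetilde H(G_2)$. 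For relation $(3)$, Lemma 1$(3)$ states $h_1(K_{1}^{n})=n$ and $h_k(K_{1}^{n})=0$ for $k\ne 1$, so $\widetilde H(K_{1}^{n})=ns$.

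It then remains to record why $(1)$–$(3)$ pin the assignment down uniquely, so that $\widetilde H=H$; this is a straightforward induction on $n(G):=\#V(G)+\#E(G)$. The minimal case $n(G)=1$ forces $G=K_{1}^{0}$ (a single isolated vertex), where both $H$ and $\widetilde H$ vanish. If $G$ is disconnected, write $G=G_1+G_2$ and apply $(2)$, each $G_i$ having strictly smaller parameter. If $G$ is connected and has an edge $uv$ with $u\ne v$, apply $(1)$: each of $G-uv$, $G\ /\ uv$, $G-u$, $G-v$ has strictly smaller $n(\cdot)$, using $deg(u)\ge 1$ and $deg(v)\ge 1$. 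If $G$ is connected and every edge is a loop, then $G=K_{1}^{n}$ and $(3)$ applies. Hence any assignment obeying $(1)$–$(3)$ — in particular both $H$ and $\widetilde H$ — is one and the same function.

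I expect the only point requiring care to be the termination bookkeeping rather than any genuine difficulty: one must check that the recursion never descends to the empty graph $\phi$ (it cannot, since $(1)$ is applied only to non-loop edges, so deleting $u$ or $v$ always leaves the other endpoint, while loop-only graphs are absorbed by $(3)$), and one should verify the small degenerate cases such as $K_{1}^{0}$, $K_{1}^{1}$, and $K_{2}$ directly against the definitions. All the substantive combinatorics has already been done in Lemma 1; Theorem 2 is essentially its generating-function repackaging.
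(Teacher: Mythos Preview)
Your proposal is correct and follows exactly the route the paper intends: the paper does not spell out a separate proof of Theorem~2 but simply remarks that ``Because of Lemma~1, Definition~2 is well-defined, and we have'' the theorem, i.e.\ $H(G,s)$ is the generating function $\sum_k h_k(G)s^k$. Your argument---verifying that $\widetilde H(G,s)=\sum_k h_k(G)s^k$ satisfies the three defining relations via Lemma~1 and then invoking uniqueness of the recursive definition---is precisely the unpacking of that one-line justification, with the added (and welcome) care of making the termination of the recursion explicit.
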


            That is to say, $H(G)$ is the generating function of Hamiltonian cycles.

            Although we can define $H(G,s)$ at first, we didn't. Because we want to think polynomial defined by recursion instead of generating function. We get a generating function by chance.
    \subsection{Cycles}
            Similar to Hamiltonian cycles, we will do similar things to cycles.
        \begin{definition}
            We call $H$ a $k$-components $l$-length cycle of $G$, if $H\subseteq{}G$, $k(H)=k$, $\#E(H)=l$, and $\forall{}v\in{}V(H),deg(v)=2$. We denote the number of $k$-components $l$-length cycle of $G$ by $c_{k,l}(G)$.
        \end{definition}

            Similarly, it's not really a cycle when $k\geq{}2$.
        \begin{lemma}
            For a graph $G:$

            $(1)c_{k,l}(G)=c_{k,l}(G-uv)+c_{k,l-1}(G\ /\ uv)-c_{k,l-1}(G-u)-c_{k,l-1}(G-v)+c_{k,l-1}(G-u-v)$, $u\neq{}v;$

            $(2)c_{k,l}(G_{1}+G_{2})=\sum_{i,j}c_{i,j}(G_{1})c_{k-i,l-j}(G_{2});$

            $(3)c_{1,1}(K_{1}^{n})=n;$ $c_{0,0}(K_{1}^{n})=1;$ $c_{k,l}(K_{1}^{n})=0,$ $(k,l)\neq{}(0,0)$ or $(1,1);$ $c_{0,0}(\phi{})=1;$ $c_{k,l}(\phi{})=0$, $(k,l)\neq{}(0,0).$
        \end{lemma}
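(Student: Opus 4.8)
The plan is to mimic the proof of Theorem 1, tracking the two parameters $k$ (number of components) and $l$ (number of edges) simultaneously. For part $(1)$, fix an edge $uv$ with $u\neq v$ and split the $k$-component $l$-length cycles $H$ of $G$ (i.e.\ spanning-degree-$2$ subgraphs on some vertex set, with $k$ components and $l$ edges) into those that do not use $uv$ and those that do. The former are exactly the objects counted by $c_{k,l}(G-uv)$. For the latter, contract $uv$: a $k$-component, $(l-1)$-length cycle in $G/uv$ has, when we pull back the contracted edge, exactly the three cases $S_1,S_2,S_3$ from Theorem 1's proof (one edge at $u$ and one at $v$; two edges at $u$; two edges at $v$), \emph{plus} one extra case $S_0$ that could not occur for genuine Hamiltonian cycles: the merged vertex is not present at all, i.e.\ neither $u$ nor $v$ lies in the subgraph. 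Then $\#S_2=c_{k,l-1}(G-v)$ (the copy of the structure avoiding $v$, but with an edge added back at $u$ contributing the $+1$ in length already absorbed, so really these count $(l-1)$-length structures missing $v$), $\#S_3=c_{k,l-1}(G-u)$, and $\#S_0=c_{k,l-1}(G-u-v)$, while $\#S_1$ is the number we want. Since $\#S_0+\#S_1+\#S_2+\#S_3=c_{k,l-1}(G/uv)$, we get
\[
\#S_1=c_{k,l-1}(G/uv)-c_{k,l-1}(G-u)-c_{k,l-1}(G-v)+c_{k,l-1}(G-u-v),
\]
and adding $c_{k,l}(G-uv)$ gives the claimed recursion.

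For part $(2)$, a spanning-degree-$2$ subgraph of $G_1+G_2$ is just the disjoint union of one in $G_1$ and one in $G_2$, and both the component count and the edge count are additive, so summing over the split $(i,j)$ of $(k,l)$ between the two pieces yields the convolution $c_{k,l}(G_1+G_2)=\sum_{i,j}c_{i,j}(G_1)c_{k-i,l-j}(G_2)$. For part $(3)$, $K_1^n$ has one vertex and $n$ loops: a degree-$2$ subgraph on the vertex set uses exactly one loop, giving $n$ choices, one component, one edge, hence $c_{1,1}(K_1^n)=n$; the empty subgraph (no vertices) has $k=0$, $l=0$, giving the $c_{0,0}=1$ entries; every other $(k,l)$ is impossible. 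The case $\phi$ is the empty graph, whose only spanning-degree-$2$ subgraph is itself, so $c_{0,0}(\phi)=1$ and all others vanish.

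The main obstacle is the bookkeeping in case analysis for $(1)$: one must be careful that the three operations $G-u$, $G-v$, $G-u-v$ applied \emph{before} accounting for the contracted edge correctly correspond to $S_2$, $S_3$, $S_0$, and in particular that the ``+1'' shift in the length parameter is consistent across all four terms (all the correction terms are evaluated at length $l-1$, matching $G/uv$, because the pulled-back edge $uv$ restores the missing edge). A secondary subtlety is the degenerate sub-case where $u$ and $v$ are joined by a multiple edge or where loops at $u$ or $v$ are present: one should check the argument still goes through by treating such edges as ordinary edges of the degree-$2$ subgraph, exactly as in Theorem 1. Once the parametrized pre-image analysis is set up correctly, everything else is the same disjoint-union and base-case verification as before, so I expect no real difficulty beyond the indexing.
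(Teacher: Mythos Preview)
Your overall strategy for part $(1)$ is right, but there is a bookkeeping error in identifying $\#S_2$ and $\#S_3$. You claim $\#S_2=c_{k,l-1}(G-v)$, but $c_{k,l-1}(G-v)$ counts \emph{all} $(k,l-1)$-cycles in $G-v$, including those that do not contain $u$ at all; case $S_2$, by contrast, requires the merged vertex to be present in the cycle of $G/uv$ with both incident edges pulling back to $u$, so the pre-image must contain $u$. Thus $c_{k,l-1}(G-v)=\#S_2+\#S_0$, not $\#S_2$ alone, and likewise $c_{k,l-1}(G-u)=\#S_3+\#S_0$. If one actually substitutes your stated values into $\#S_1=c_{k,l-1}(G/uv)-\#S_0-\#S_2-\#S_3$, the last term comes out as $-c_{k,l-1}(G-u-v)$ rather than $+c_{k,l-1}(G-u-v)$, so the displayed formula does not follow from the argument as written. (This issue is invisible in Theorem~1 because Hamiltonian cycles are spanning, so $u$ is automatically present once $v$ is deleted.)

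The fix is easy: either correct the identities to $\#S_2=c_{k,l-1}(G-v)-c_{k,l-1}(G-u-v)$ and $\#S_3=c_{k,l-1}(G-u)-c_{k,l-1}(G-u-v)$, after which the inclusion--exclusion gives the right sign, or adopt the paper's organization, which classifies the $(k,l)$-cycles of $G$ itself into five types $c^{(1)},\dots,c^{(5)}$ according to which of $u$, $v$, $uv$ they contain, and then reads off each of the six quantities in the recursion as a sum of these types. That five-way split handles the overcounting transparently. Parts $(2)$ and $(3)$ are fine.
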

            \begin{proof}
                For cycles in $G$, and an edge $uv\in{}E$, $u\neq{}v$, we classify cycles by whether they contain $u$, $v$ or $uv$. There are $5$ kind of cycles, whose number is denoted by$:$

                $c^{(1)}:$ contain $u$, $v$, $uv$;

                $c^{(2)}:$ contain $u$, $v$, not contain $uv$;

                $c^{(3)}:$ contain $u$, not contain $v$, $uv$;

                $c^{(4)}:$ contain $v$, not contain $u$, $uv$;

                $c^{(5)}:$ not contain $u$, $v$, $uv$;\\
                We also use subscript to represent its number of components and length. We have:

                $c_{k,l}(G)=c^{(1)}_{k,l}+c^{(2)}_{k,l}+c^{(3)}_{k,l}+c^{(4)}_{k,l}+c^{(5)}_{k,l},$\\
                and(In the case of $G\ /\ uv$, we will use the method in Theorem 1, i.e., pre-image.):

                $c_{k,l}(G-uv)=c^{(2)}_{k,l}+c^{(3)}_{k,l}+c^{(4)}_{k,l}+c^{(5)}_{k,l};$

                $c_{k,l-1}(G\ /\ uv)=c^{(1)}_{k,l}+c^{(3)}_{k,l-1}+c^{(4)}_{k,l-1}+c^{(5)}_{k,l-1};$

                $c_{k,l-1}(G-u)=c^{(4)}_{k,l-1}+c^{(5)}_{k,l-1};$

                $c_{k,l-1}(G-v)=c^{(3)}_{k,l-1}+c^{(5)}_{k,l-1};$

                $c_{k,l-1}(G-u-v)=c^{(5)}_{k,l-1}.$\\
                So we have $c_{k,l}(G)=c_{k,l}(G-uv)+c_{k,l-1}(G\ /\ uv)-c_{k,l-1}(G-u)-c_{k,l-1}(G-v)+c_{k,l-1}(G-u-v)$, $u\neq{}v$.

                Obviously $(2)$ and $(3)$ are right, and by using $(1)$ we can make the size of a graph down, so we can get $c_{k,l}(G)$ by the recursion.
            \end{proof}

            Similarly, Lemma 2 can be rewritten in a beautiful way.
        \begin{definition}
            For a graph $G$, $C(G,s,t)$ (sometimes short for $C(G)$) is called the cycle polynomial, which is got from:

            $(1)C(G)=C(G-uv)+tC(G\ /\ uv)-tC(G-u)-tC(G-v)+tC(G-u-v)$, $u\neq{}v;$

            $(2)C(G_{1}+G_{2})=C(G_{1})C(G_{2});$

            $(3)C(K_{1}^{n})=1+nst$; $C(\phi{})=1.$
        \end{definition}

            Because of Lemma 2, Definition 4 is well-defined, and we have:
        \begin{theorem}
            For a graph $G$, $[s^{k}t^{l}]C(G,s,t)=c_{k,l}(G).$
        \end{theorem}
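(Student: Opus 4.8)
The plan is to mimic the proof of Theorem 2, now carrying the extra variable $t$ that records length. Put $\widetilde C(G,s,t) := \sum_{k,l} c_{k,l}(G)\, s^{k} t^{l}$; this is a genuine polynomial, since $G$ is finite (so it has finitely many subgraphs) and hence only finitely many $c_{k,l}(G)$ are nonzero, all with $k,l\ge 0$. I will show that $\widetilde C$ obeys the three defining relations of Definition 4. Once that is done, $\widetilde C = C$ because those relations determine the polynomial uniquely — this is precisely the well-posedness of Definition 4 — and then extracting the coefficient of $s^{k} t^{l}$ yields the theorem.

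First I would translate Lemma 2 into generating-function language. Multiplying Lemma 2(2) by $s^{k} t^{l}$ and summing over $k,l$ converts the double convolution into a product, giving $\widetilde C(G_1 + G_2) = \widetilde C(G_1)\,\widetilde C(G_2)$, which is Definition 4(2). Specializing Lemma 2(3) gives $\widetilde C(K_1^n) = 1 + n s t$ and $\widetilde C(\phi) = 1$, which is Definition 4(3). For the deletion--contraction rule, multiply Lemma 2(1) by $s^{k} t^{l}$ and sum; the key bookkeeping point is that every term on the right except the $G - uv$ term involves $c_{k,l-1}$, and $\sum_{k,l} c_{k,l-1}(\,\cdot\,)\, s^{k} t^{l} = t \sum_{k,l'} c_{k,l'}(\,\cdot\,)\, s^{k} t^{l'}$, so the index shift $l \mapsto l-1$ turns into one factor of $t$. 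This produces $\widetilde C(G) = \widetilde C(G-uv) + t\,\widetilde C(G/uv) - t\,\widetilde C(G-u) - t\,\widetilde C(G-v) + t\,\widetilde C(G-u-v)$, which is Definition 4(1).

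It then remains to justify that the three relations pin down a unique polynomial on every nonempty graph, which is the same uniqueness already needed for Definition 4 to make sense. I would argue by induction on the number of non-loop edges of $G$. If $G$ has none, then $G = K_1^{n_1} + \dots + K_1^{n_m}$ and relations (2) and (3) fix $C(G)$ directly; otherwise choose a non-loop edge $uv$ and apply relation (1), noting that $G-uv$, $G/uv$, $G-u$, $G-v$, and $G-u-v$ each have strictly fewer non-loop edges (contracting $uv$ removes $uv$ and can only turn other edges into loops, and the vertex deletions destroy $uv$ as well), so their values are already determined by the induction hypothesis. Since $\widetilde C$ satisfies the identical relations with the identical base values, $\widetilde C = C$, and comparing coefficients of $s^{k} t^{l}$ gives $[s^{k} t^{l}] C(G,s,t) = c_{k,l}(G)$.

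The argument is routine; the only step that needs genuine care is the bookkeeping in the second paragraph — getting the $t$-shift in relation (1) exactly right and not mislabeling which summands carry the $l-1$ — together with the observation in the third paragraph that contraction never increases the count of non-loop edges, which is what makes the recursion terminate and the uniqueness induction valid.
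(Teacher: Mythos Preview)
Your proposal is correct and is exactly the approach the paper takes, only spelled out in full: the paper simply asserts that ``Because of Lemma 2, Definition 4 is well-defined'' and that Theorem 3 follows, leaving the generating-function translation and the uniqueness induction implicit. Your write-up supplies precisely those details, with the correct $t$-shift bookkeeping and the right termination argument on non-loop edges.
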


            Similarly, $C(G)$ is the generating function of cycles.
\section{Well-defined polynomial invariants}
            The generating function of matchings is well-known. There are many books about it. We just review it here for the following works.
        \begin{definition}
            For a graph $G$, $M(G,r)$ (sometimes short for $M(G)$) is called the matching polynomial, which is got from:

            $(1)M(G)=M(G-uv)+rM(G-u-v)$, $u\neq{}v;$

            $(2)M(G_{1}+G_{2})=M(G_{1})M(G_{2});$

            $(3)M(K_{1}^{n})=1$; $M(\phi{})=1.$
        \end{definition}

            $M(G)$ is well-defined, and: For a graph $G$, $[r^{j}]M(G,r)=m_{j}(G).$

            Then, let's merge $C(G)$ and $M(G)$:
        \begin{definition}
            For a graph $G$, $D(G,r,s,t)$ (sometimes short for $D(G)$) is got from:

            $(1)D(G)=D(G-uv)+tD(G\ /\ uv)-tD(G-u)-tD(G-v)+rD(G-u-v)$, $u\neq{}v;$

            $(2)D(G_{1}+G_{2})=D(G_{1})D(G_{2});$

            $(3)D(K_{1}^{n})=1+nst$; $D(\phi{})=1.$
        \end{definition}

            Be careful about this definition, not like the polynomials defined above (which are generating functions), it may not be well-defined (really an invariant). So we have to prove that it's well-defined. For getting a universal result, we have to make some preparation.
        \begin{definition}
            We say a polynomial invariant $F(\cdot)$ is trivial, if for any graph $G$, $F(G)=(F(K_{1}))^{\#V(G)}$, i.e., $F(\cdot)$ only have the information of how many vertices are there in a graph.
        \end{definition}

        Now we give a simple lemma.

        \begin{lemma}
            For a polynomial invariant $F(\cdot)$ with the condition $F(G_{1}+G_{2})=F(G_{1})F(G_{2})$, if for any graph $G$ and arbitrary different vertices $u$, $v$, $w$ of $G$, $F(G-u-v)=F(G-v-w)$, then $F(\cdot)$ is trivial.
        \end{lemma}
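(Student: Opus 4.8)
I would argue that the hypothesis $F(G-u-v)=F(G-v-w)$ forces $F$ to depend only on the number of vertices, by induction on $\#V(G)$, using the multiplicativity $F(G_1+G_2)=F(G_1)F(G_2)$ to bootstrap the induction and to identify the value of $F$ on disjoint unions of single vertices. The base cases $\#V(G)\le 1$ give the constant $F(K_1)$ directly; the crux is the inductive step.

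\smallskip

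\emph{Step 1: Reduce to a ``transitivity'' statement.} Fix $n$ and suppose $F(H)=(F(K_1))^{n-1}$ for every graph $H$ on $n-1$ vertices (inductive hypothesis). Let $G$ be a graph on $n\ge 2$ vertices. If $G$ has at least three vertices, I would show $F(G-u)=F(G-w)$ for any two vertices $u,w$ of $G$: pick a third vertex $v$, and note $G-u$ and $G-w$ both have $n-1$ vertices, so by the inductive hypothesis they are already equal to $(F(K_1))^{n-1}$ — wait, that is too quick, because the inductive hypothesis is exactly what we are trying to prove for all graphs on $n-1$ vertices, and at this stage we only know it for graphs \emph{with fewer than $n-1$ vertices}. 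So instead the induction must be on $\#V(G)$ with the full strength: assume the statement holds for all graphs on $m<n$ vertices; prove it for all graphs on $n$ vertices.

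\smallskip

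\emph{Step 2: The inductive step.} Let $G$ have $n\ge 2$ vertices. I want $F(G)=(F(K_1))^n$. Consider $G+K_1$, a graph on $n+1$ vertices. Hmm — adding a vertex goes the wrong way. The right move is: work with a graph one size \emph{up} whose relevant double-deletions recover $F(G)$. Concretely, take $G':=G+K_1+K_1$ (two extra isolated vertices, so $n+2$ vertices), with $a,b$ the two new isolated vertices and $u$ any original vertex. The hypothesis applied to $G'$ with the triple $(a,u,b)$ gives $F(G'-a-u)=F(G'-u-b)$, i.e. $F((G-u)+K_1)=F((G-u)+K_1)$ — vacuous. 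Try the triple $(a,b,u)$: $F(G'-a-b)=F(G'-b-u)$, i.e. $F(G)=F((G-u)+K_1)=F(G-u)F(K_1)$. This is the key identity: \textbf{for every vertex $u$, $F(G)=F(G-u)\,F(K_1)$}, provided $G$ has at least one vertex (so that $G'$ has $\ge 3$ vertices and the three chosen vertices $a,b,u$ are genuinely distinct). Iterating, $F(G)=F(\emptyset)\,(F(K_1))^n$ if $G$ has $n$ vertices, and taking $G=K_1$ shows $F(\emptyset)=1$; hence $F(G)=(F(K_1))^{\#V(G)}$.

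\smallskip

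\emph{The main obstacle.} The only delicate point is making sure the three vertices fed into the hypothesis are \emph{distinct} and that $G'$ is a legitimate graph in the paper's sense (nonempty, and a disjoint union only of nonempty pieces) — this is why one pads with \emph{two} isolated vertices rather than one, and why the argument needs no induction at all once the padding trick is in place: the identity $F(G)=F(G-u)F(K_1)$ drops out of a single application of the hypothesis to $G+K_1^{\,0}+K_1^{\,0}$ together with multiplicativity. I would double-check the edge case where $G$ itself is a single vertex or is already disconnected, but multiplicativity handles disconnected $G$ automatically, so no separate treatment is needed. Thus the proof is essentially the padding-plus-multiplicativity identity, with the rest being a one-line iteration.
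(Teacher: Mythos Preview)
Your argument is correct and follows essentially the same route as the paper: build an auxiliary graph by adjoining two new vertices to $G$, apply the hypothesis to a suitable triple, and read off the key identity $F(G)=F(G-u)\,F(K_1)$, then iterate. The paper attaches the two new vertices via a pendant path $u\text{--}v\text{--}w$ rather than as isolated points, but since both added edges are removed in the deletions $H-u-v$ and $H-v-w$ anyway, this is cosmetic; your padding with two isolated $K_1$'s is in fact a hair cleaner. One small quibble: your line ``taking $G=K_1$ shows $F(\emptyset)=1$'' is unnecessary and mildly delicate (it needs $F(K_1)\neq 0$, and the paper treats the empty graph as a special object); it is tidier to stop the iteration at a single remaining vertex and observe directly that $G'-b-u=K_1$ in that case, giving $F(G)=F(K_1)$ as the base, exactly as the paper does.
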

            \begin{proof}
                For any graph $G$, we make a graph $H$ from $G$ by the following way:

                $(1)$Choose an arbitrary vertex of $G$, and name it by $u$;

                $(2)$$V(H)=V(G)\cup{}\{v,w\}$, where $v$, $w\notin{}V(G)$;

                $(3)$$E(H)=E(G)\cup{}\{uv,vw\}$.

                Now we get $F(H-u-v)=F(H-v-w)$. That is to say $F(G-u+w)=F(G)$. Because $w\notin{}V(G)$, $F(G-u+w)=F(G-u)F(K_{1})$. That is to say for any $G$ and arbitrary vertices $u$ of it, $F(G)=F(G-u)F(K_{1})$. By mathematical induction, we have $F(G)=(F(K_{1}))^{\#V(G)}$, i.e., $F(\cdot)$ is trivial.
            \end{proof}

        Now the preparation is over, let's give a universal result.
        \begin{theorem}
            For a graph $G$, the polynomial got from:

            $(1)F(G)=aF(G-uv)+bF(G\ /\ uv)+cF(G-u)+cF(G-v)+dF(G-u-v)$, $u\neq{}v$;

            $(2)F(G_{1}+G_{2})=F(G_{1})F(G_{2});$

            $(3)F(K_{1}^{n})=f(n)$; $F(\phi{})=1$\\
            is well-defined if and only if $bc+cc+d-ad=0$ or $F(\cdot)$ is trivial.
        \end{theorem}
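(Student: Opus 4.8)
The plan is to attack the two directions separately, with the ``only if'' direction being essentially a self-contained algebraic computation and the ``if'' direction splitting into a trivial case and a genuine well-definedness argument. First I would set up what ``well-defined'' means precisely here: the recursion $(1)$ can be applied to any non-loop edge, and for a fixed graph $G$ the computation tree depends on the order in which edges are chosen; well-definedness means all these evaluation orders give the same polynomial. The standard method (as for the Tutte polynomial) is to show that applying the reduction to two different edges $e$ and $e'$ and then reducing further gives the same result --- a local ``confluence'' or ``commuting'' check --- together with checking consistency on the base cases, and then invoke a diamond-lemma-type argument. So the core of the argument is: \emph{reduce on $e=uv$ first, then on $e'=xy$, versus reduce on $e'$ first, then on $e$, and compare.}

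For the ``only if'' direction I would exhibit one small graph on which two evaluation orders disagree unless $bc+c^2+d-ad=0$, assuming $F$ is not trivial. The natural candidate is a graph with two edges sharing structure --- e.g. a triangle, or two parallel edges plus a loop, or the graph $K_1^n$ with an extra vertex attached --- small enough that the recursion terminates in one or two steps but rich enough that the order of edge choice matters. Expanding $F$ on edge $e$ then edge $e'$ and vice versa produces a polynomial identity whose coefficients involve $a,b,c,d$ and the base values $f(n)$; forcing the two expansions to agree for all such graphs yields either the relation $bc+c^2+d-ad=0$ or a degeneracy that forces $F$ to depend only on $\#V(G)$ (this is where Lemma~3 is used: I would show that if the relation fails, then $F(G-u-v)=F(G-v-w)$ for all triples, hence $F$ is trivial). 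I expect some care is needed to handle the cases where $e'$ is incident to $u$ or $v$, incident to neither, or is a loop created by contracting $e$ --- these give different sub-expansions and each must be checked.

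For the ``if'' direction, the trivial case is immediate: if $F(G)=(F(K_1))^{\#V(G)}$ then deletion $G-uv$ and contraction $G/uv$ have $\#V$ equal to $\#V(G)$ and $\#V(G)-1$ respectively, $G-u$, $G-v$ have $\#V(G)-1$, and $G-u-v$ has $\#V(G)-2$; substituting and using $F(\phi)=1$ one checks recursion $(1)$ holds as a formal identity in $a,b,c,d$ (no constraint needed because both sides are monomials in $F(K_1)$ and the combinatorial identity is automatic), and $(2)$, $(3)$ hold by definition, so the tree never branches inconsistently. For the non-trivial case assuming $bc+c^2+d-ad=0$, I would carry out the two-edge commutation computation: expand on $e$ then on $e'$, getting a sum of $F$-values of graphs obtained by various combinations of delete/contract/delete-vertex operations applied to both edges; do the same in the other order; and verify term-by-term that the coefficients match, the discrepancy between the two orders being exactly a multiple of $bc+c^2+d-ad$ and hence zero. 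The main obstacle --- and the step I'd budget the most effort for --- is bookkeeping in this commutation check: the operations on $e$ and $e'$ interact (contracting $e$ can merge an endpoint of $e'$, deleting $u$ can delete $e'$ if $e'$ is incident to $u$, a loop can appear), so I would organize it by cases on the incidence pattern of $e$ and $e'$ (disjoint; sharing one vertex; parallel edges; $e$ or $e'$ a loop or becoming a loop), and within each case reduce the comparison to the single scalar identity $bc+c^2+d-ad=0$. Once local commutation and base-case consistency are established, a Newman/diamond-lemma argument (the reduction terminates because $\#E$ strictly decreases under $(1)$ on non-loops, and loops are handled by $(3)$ after splitting off components via $(2)$) gives global well-definedness.
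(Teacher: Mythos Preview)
Your overall strategy matches the paper's: reduce well-definedness to a local commutation check between two edge-reductions, split into cases by the incidence pattern (disjoint edges, edges sharing a vertex, parallel edges), and find that the only obstruction arises in the shared-vertex case, where the difference $R_{uv}R_{vw}F(G)-R_{vw}R_{uv}F(G)$ works out to $(bc+c^{2}+d-ad)\bigl(F(G-u-v)-F(G-v-w)\bigr)$; Lemma~3 then converts vanishing of the second factor into triviality. The paper extracts both directions simultaneously from this single identity, so your separate ``only if'' step via a specific small test graph is redundant (though not wrong).

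There is, however, a genuine error in your treatment of the trivial branch of the ``if'' direction. You assert that if $F(G)=f^{\#V(G)}$ with $f=F(K_{1})$, then recursion~(1) ``holds as a formal identity in $a,b,c,d$ (no constraint needed)''. But substituting gives $f^{n}$ on the left and $af^{n}+(b+2c)f^{n-1}+df^{n-2}$ on the right, and these agree only when $(1-a)f^{2}=(b+2c)f+d$, which is a nontrivial constraint, not an identity. The correct argument for this branch is the one the commutation computation already hands you: if $F$ is trivial on the smaller graphs appearing in the expansion, then $F(G-u-v)=f^{\#V(G)-2}=F(G-v-w)$, so the obstruction $(bc+c^{2}+d-ad)\bigl(F(G-u-v)-F(G-v-w)\bigr)$ vanishes irrespective of the value of $bc+c^{2}+d-ad$, and the two reduction orders agree. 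In short, do not attempt to verify~(1) for a postulated trivial $F$; instead observe that triviality (which Lemma~3 supplies inductively on the smaller graphs) kills the commutation defect directly.
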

            \begin{proof}
                In this proof, we use $u$, $v$, $w$, $x$ as different vertices in $G$. And use $R_{uv}$ to present that we use $(1)$ on the edge $uv$, i.e., $R_{uv}F(G)=aF(G-uv)+bF(G\ /\ uv)+cF(G-u)+cF(G-v)+dF(G-u-v)$, $uv\in{}E$. We suppose $R_{uv}F(G)=F(G)$, if $uv\notin{}E$(only in this proof).

                By using $R_{uv}$, $uv$ is not contained in the graphs we get, so
                \begin{eqnarray}
                \Bigg(\prod_{u\neq{}v}R_{uv}\Bigg)F(G)
                \end{eqnarray}
                contains no edges which are not loops, i.e., only contains loops. For proofing that the polynomial is well-defined, we should show the following $3$ points:

                $P_{1}:$ `Formula (1)' doesn't change by the order of $R_{uv}$, i.e., is commutative;

                $P_{2}:$ the polynomial doesn't change by the order of $(1)$ and $(2)$;

                $P_{3}:$ the polynomial of any graphs that only contains loops is uniquely determined.

                In fact, $P_{3}$ is obvious, because of the commutative law of multiplication. And for $P_{2}$, also obvious, because when we use $R_{uv}$, it only changes the connected component which contains $uv$. Let's think about $P_{1}$ in the following:

                To proof `Formula (1)' is commutative, we should only proof any two consecutive operations is commutative. And if the edge of at least one operation in two consecutive operations is not in $E(G)$, they are commutative. So we should only focus on the case that the edges of two consecutive ones are both in $E(G)$. There are $3$ situations:

                $S_{1}:$ $R_{uv}$ and $R_{vw}$;

                $S_{2}:$ $R_{uv}$ and $R_{wx}$;

                $S_{3}:$ $R_{uv}$ and $R_{uv}$ (in fact, should be $\{u,v\}_{1}$ and $\{u,v\}_{2}$).

                Obviously we have $S_{3}$, because they are same when we don't care their names. It's not obvious for $S_{2}$, but we will not proof it here, because it's similar to and simpler than $S_{1}$ which we will proof in the following. The proof is simple but long, so, for clear, we use a matrix just to present the sum of its elements.
                \small
                \begin{align}
                &R_{vw}R_{uv}F(G)\notag&\\
                =&R_{vw}[(aF(G-uv)+bF(G\ /\ uv)+cF(G-u)+cF(G-v)+dF(G-u-v)]\notag\\
                =&\left(
                \begin{array}{lllll}
                \begin{smallmatrix}
                aaF(G-uv-vw) & abF(G-uv/vw) & acF(G-uv-v) & acF(G-uv-w) & adF(G-uv-v-w)\\
                abF(G/uv-vw) & bbF(G/uv/vw) & bcF(G/uv-v) & bcF(G/uv-w) & bdF(G/uv-v-w)\\
                acF(G-u-vw) & bcF(G-u/vw) & ccF(G-u-v) & ccF(G-u-w) & cdF(G-u-v-w)\\
                cF(G-v) &\\
                dF(G-u-v) &\\
                \end{smallmatrix}
                \end{array}
                \right).&
                \end{align}

				\normalsize
                We will change the order of some operations of graphs, such as $G-uv\ /\ vw=G\ /\ vw-uv$ and etc. They are all simple and we will not proof them here. So we have `Formula (2)'
                \small
                \begin{align}
                =&\left(
                \begin{array}{llll}
                \begin{smallmatrix}
                aaF(G-vw-uv) & abF(G/vw-uv) & acF(G-vw-v) & acF(G-w-uv) & adF(G-v-w)\\
                abF(G-vw/uv) & bbF(G/vw/uv) & bcF(G-u-v) & bcF(G-w/uv) & bdF(G/vw-u-w)\\
                acF(G-vw-u) & bcF(G/vw-u) & ccF(G-u-v) & ccF(G-u-w) & cdF(G-u-v-w)\\
                cF(G-v) &\\
                dF(G-u-v) &\\
                \end{smallmatrix}
                \end{array}
                \right).&
                \end{align}

                \normalsize
                In other side, if we change $u$ and $w$ in `$R_{vw}R_{uv}F(G)=$ Formula (2)', we will get
                \small
                \begin{align}
                &R_{uv}R_{vw}F(G)&\notag\\
                =&\left(
                \begin{array}{llll}
                \begin{smallmatrix}
                aaF(G-vw-uv) & abF(G-vw/uv) & acF(G-vw-v) & acF(G-vw-u) & adF(G-u-v)\\
                abF(G/vw-uv) & bbF(G/vw/uv) & bcF(G-v-w) & bcF(G/vw-u) & bdF(G/vw-u-w)\\
                acF(G-w-uv) & bcF(G-w/uv) & ccF(G-v-w) & ccF(G-u-w) & cdF(G-u-v-w)\\
                cF(G-v) &\\
                dF(G-v-w) &\\
                \end{smallmatrix}
                \end{array}
                \right).&
                \end{align}

                \normalsize
                Because of $R_{vw}R_{uv}F(G)=R_{uv}R_{vw}F(G)$, `Formula (3) $-$ Formula (4)' should equal to $0$. So we get $(bc+cc+d-ad)[F(G-u-v)-F(G-v-w)]=0$. So $F(\cdot)$ is well-defined if and only if $bc+cc+d-ad=0$ or $[F(G-u-v)-F(G-v-w)]=0$. Because of the Lemma 3, $F(\cdot)$ is well-defined if and only if $bc+cc+d-ad=0$ or $F(\cdot)$ is trivial.
            \end{proof}

            Because of the Theorem 4, $D(G)$ is  well-defined.
\section{An generaliztion of Tutte polynomial}
        When we talk about polynomial invariants of graphs, we must mention the Tutte polynomial $T(G,x,y)$ (the definition and properties of which is in \cite{dxs}). Next, we will merge $D(G)$ and $T(G)$:
        \begin{definition}
            For a graph $G$, $J(G,x,y,\lambda{},r,s,t)$ (sometimes short for $J(G)$) is got from:

            $(1)J(G)=J(G-uv)+tJ(G\ /\ uv)-\lambda{}tJ(G-u)-\lambda{}tJ(G-v)+rJ(G-u-v)$, $\lambda{}^{2}=\lambda{}$, $u\neq{}v;$

            $(2)J(G_{1}+G_{2})=J(G_{1})J(G_{2});$

            $(3)J(K_{1}^{n})=xy^{n}+nst$; $J(\phi{})=1.$
        \end{definition}

            Because of Theorem 4, $J(G)$ is well-defined. And then, we will give some properties of $J(G)$.
        \begin{proposition}
            For a graph $G:$

            $(1)J(G,0,1,1,0,s,1)=H(G,s);$

            $(2)J(G,1,1,1,t,s,t)=C(G,s,t);$

            $(3)J(G,1,1,1,r,s,t)=D(G,r,s,t);$

            $(4)J(G,x,y,0,0,0,1)=x^{k(G)}T(G,1+x,y);$

            $(5)[s^{k}]J(G,0,1,0,0,s,1)=\#\{H\subseteq\subseteq{}G\ |\ k(H)=k;H=\sum_{i=1}^{k}H_{i},\#E(H_{i}\\)=\#V(H_{i})\};$

            $(6)[s^{k}t^{l}]J(G,1,1,0,st-t,s,t)=\#\{H\subseteq{}G\ |\ k(H)=k;\#E(H)=l;H=\sum_{i=1}^{k}H_{i},\#E(H_{i})\leq{}\#V(H_{i})\}.$
        \end{proposition}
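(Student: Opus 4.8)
\emph{Plan.} The idea is to prove all six identities by one device: into the defining system of $J$ in Definition 8 substitute the indicated numbers for $x,y,\lambda{},r$ (keeping $s$, and where relevant $t$, as indeterminates), read off the resulting edge recursion, the product rule over disjoint unions, and the values on $K_{1}^{n}$ and $\phi{}$, and then check that the right-hand side obeys exactly the same system. Since a recursion of the shape (1) strictly lowers the number of non-loop edges (the edge $uv$ is removed from each of $G-uv$, $G\ /\ uv$, $G-u$, $G-v$, $G-u-v$), every graph is driven down to a disjoint union of graphs $K_{1}^{n_{i}}$, on which the product rule and the $K_{1}^{n}$-values pin the answer down; hence such a system has at most one solution and the two sides agree. (This also uses that $J$ is well defined, which is the instance $bc+cc+d-ad=-\lambda{}t^{2}+\lambda{}^{2}t^{2}=0$ of Theorem 4 already noted.) For (1)--(3) the verification is immediate: at $(x,y,\lambda{},r,s,t)=(1,1,1,r,s,t)$ the $J$-recursion is verbatim the $D$-recursion of Definition 6 and $J(K_{1}^{n})=1+nst=D(K_{1}^{n})$, giving (3); putting $r=t$ turns it into the $C$-recursion of Definition 4, giving (2); and at $(0,1,1,0,s,1)$ the $r$-term disappears and (1) collapses to the Hamiltonian recursion of Definition 2 with $J(K_{1}^{n})=ns$, giving (1). (In (1), (4), (5) the graph $\phi{}$ is never produced, since the $r$-term is absent, so its value is immaterial; in (2), (3), (6) one uses $J(\phi{})=1$.)

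For (4), the substitution $(x,y,0,0,0,1)$ collapses the system to $F(G)=F(G-uv)+F(G\ /\ uv)$ for every non-loop edge, $F(G_{1}+G_{2})=F(G_{1})F(G_{2})$, $F(K_{1}^{n})=xy^{n}$, $F(\phi{})=1$. I would verify that $F(G):=x^{k(G)}T(G,1+x,y)$ satisfies this. Multiplicativity is clear since $k(\cdot)$ and $T$ both split over disjoint unions, and $F(K_{1}^{n})=xy^{n}$ because $K_{1}^{n}$ is one vertex carrying $n$ loops. For the recursion, split on whether $uv$ is a bridge. If $uv$ is not a bridge, $k(G)=k(G-uv)=k(G\ /\ uv)$ and $T(G)=T(G-uv)+T(G\ /\ uv)$, so the identity is immediate. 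If $uv$ is a bridge, then $k(G-uv)=k(G)+1$, $k(G\ /\ uv)=k(G)$, $T(G-uv)=T(G\ /\ uv)$, and $T(G)=(1+x)\,T(G\ /\ uv)$ at the point $(1+x,y)$; hence $F(G-uv)+F(G\ /\ uv)=\bigl(x^{k(G)+1}+x^{k(G)}\bigr)T(G\ /\ uv)=x^{k(G)}(1+x)T(G\ /\ uv)=F(G)$. By uniqueness, $F=J(\cdot,x,y,0,0,0,1)$.

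For (5) and (6) -- the combinatorial core -- I would set $P(G)=\sum_{H}s^{k(H)}t^{\#E(H)}$ summed over the stated subgraph family (with $t\mapsto 1$ in (5)) and show $P$ satisfies the relevant $J$-system. The product rule and base values are direct counts: in (5) the condition $\#E(H_{i})=\#V(H_{i})$ forces each component to contain a cycle, so on $K_{1}^{n}$ exactly the $n$ one-loop subgraphs qualify and $P(K_{1}^{n})=ns$; in (6) -- where one must read the family so that each component carries at least one edge (with isolated-vertex components allowed the value on $K_{1}^{n}$ would not match) -- the empty subgraph together with the $n$ one-loop subgraphs qualify, giving $P(K_{1}^{n})=1+nst$. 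The heart is the edge step: fix a non-loop edge $uv$ and split the family of $G$ by whether $uv\in E(H)$. The members avoiding $uv$ are exactly the family of $G-uv$. For members containing $uv$ I would use the pre-image-of-contraction idea from the proof of Theorem 1: contracting $uv$ drops both $\#E$ and $\#V$ of the one component meeting $uv$ by $1$, hence preserves $\#E-\#V$ of every component and the number of components, and adding $uv$ back is its inverse. In (5) this gives a clean bijection with the whole family of $G\ /\ uv$, reproducing $P(G)=P(G-uv)+P(G\ /\ uv)$. In (6) one further separates the case that the $uv$-component is the single edge $uv$ (deleting it leaves a member of the family of $G-u-v$, contributing $st\,P(G-u-v)$) from the case that it is larger (which bijects with those members of the family of $G\ /\ uv$ that use the contracted vertex, contributing $t\bigl(P(G\ /\ uv)-P(G-u-v)\bigr)$); adding these produces the terms $t\,P(G\ /\ uv)+(st-t)\,P(G-u-v)$ of $J$ at $(1,1,0,st-t,s,t)$.

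The step I expect to be the main obstacle is (6): making the contraction bijection airtight in the presence of loops and parallel edges, and pinning down the subgraph family exactly so that the value $1+nst$ on $K_{1}^{n}$ is reproduced -- in particular seeing why isolated-vertex components must be barred and why the dichotomy ``the $uv$-component is exactly $uv$'' versus ``it is larger'' is precisely what creates the coefficient $st-t$ on the $G-u-v$ term. This is the recurring ``pre-image of edge contraction'' subtlety the paper flags; the bookkeeping of component counts and edge counts under contraction is where the real care lies, and once this pattern is in place the remaining items are routine substitutions.
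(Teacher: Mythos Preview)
Your approach is essentially the paper's: items (1)--(3) by direct substitution into the $J$-recursion, item (4) by verifying that $x^{k(G)}T(G,1+x,y)$ satisfies the specialized recursion using the bridge identity $T(G-uv)=T(G\ /\ uv)$, and items (5)--(6) by the pre-image-of-contraction bookkeeping of Theorem~1. Your treatment is in fact more careful than the paper's sketch---in particular your observation that the family in (6) must exclude isolated-vertex components (while allowing $H=\phi$) in order to reproduce $J(K_{1}^{n})=1+nst$, and your explicit splitting of the $uv\in E(H)$ case into ``the $uv$-component is exactly $uv$'' versus ``larger'' to generate the coefficient $st-t$, are precisely the points the paper leaves implicit.
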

            \begin{proof}
                (1)-(3) is obvious from the foregoing. We can proof (4) by rewriting the definition of $x^{k(G)}T(G,1+x,y)$ so that it's same to the definition of $J(G,x,y,0,0,0,1)$, by using $T(G-uv,x,y)=T(G\ /\ uv,x,y)$ when $uv$ is a bridge in \cite{dxs}. And we will not proof (5) and (6) at here, because they are similar to the proof of Theorem 1.
            \end{proof}

            The subgraph in (5) is called a $k$-components spanning functional subgraph of $G$. A functional graph, which is defined by Frank Harary in \cite{fg}, is a graph that, in every component of which, there are same number of edges and vertices. Let's think a question: ``There are $n$ people, every two people in whom know or don't know each other. Everyone chooses an other person he knows. How many situations are there, which there are no two people choose each other?'' In fact, there are $\sum_{k}2^{k}[s^{k}]J(G,0,1,0,0,s,1)$ situations, i.e., $J(G,0,1,0,0,2,1)$, where $G$ is the relation graph of the $n$ people.

            Similarly, the subgraph in (6) is called a $k$-components $l$-length functional subgraph. When $l<\#V(H)$, it's not really a functional graph. In fact, every component is a functional graph or a tree. This is very interesting if we notice that every component is a cycle or a path in the study of the cover polynomial in \cite{cp}.
\section{A note}
            Finally, we will have more discussion on polynomial invariants of graphs. The meaning of recursion on edges is, when we focus on an edge, we can do something only on this edge, i.e., we can only see the part in the dotted line of the graph in Figure 1 when we focus on $uv$. So the Theorem 4 maybe have given all polynomial invariants of graph defined by recursion on edges.

            But, $F(G)$ is not enough to distinguish every pair of different graphs. For example, $F(G)$ can't distinguish the one in Figure 2 (which is got from an example in \cite{shou}).
            \begin{figure}[h]
            \centering
            \begin{minipage}[t]{0.4\linewidth}
            \centering
            \includegraphics[width=0.8\textwidth]{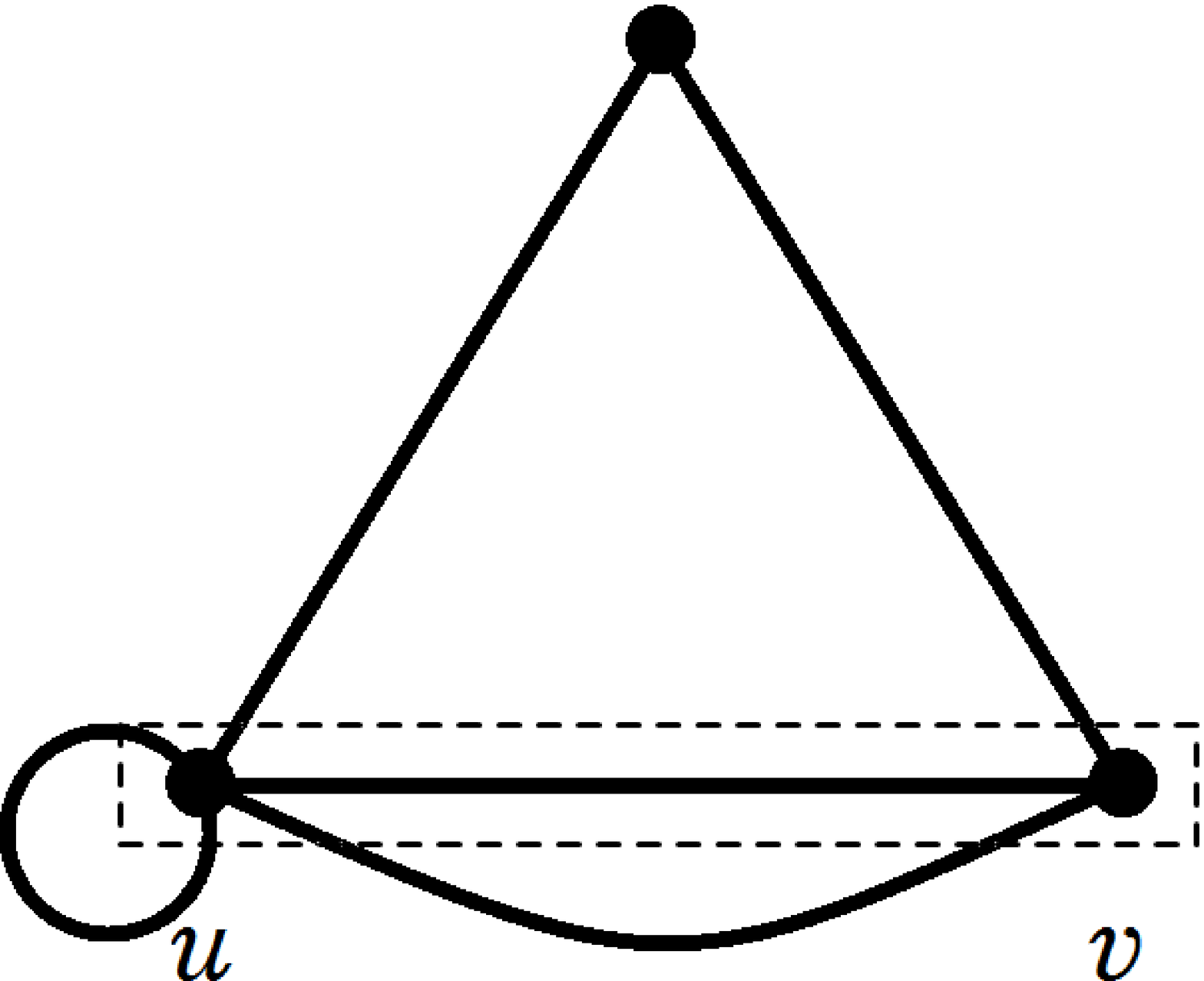}
            \centering
            \caption{}
            \label{uv}
            \end{minipage}
            \begin{minipage}[t]{0.5\linewidth}
            \centering
            \includegraphics[width=0.8\textwidth]{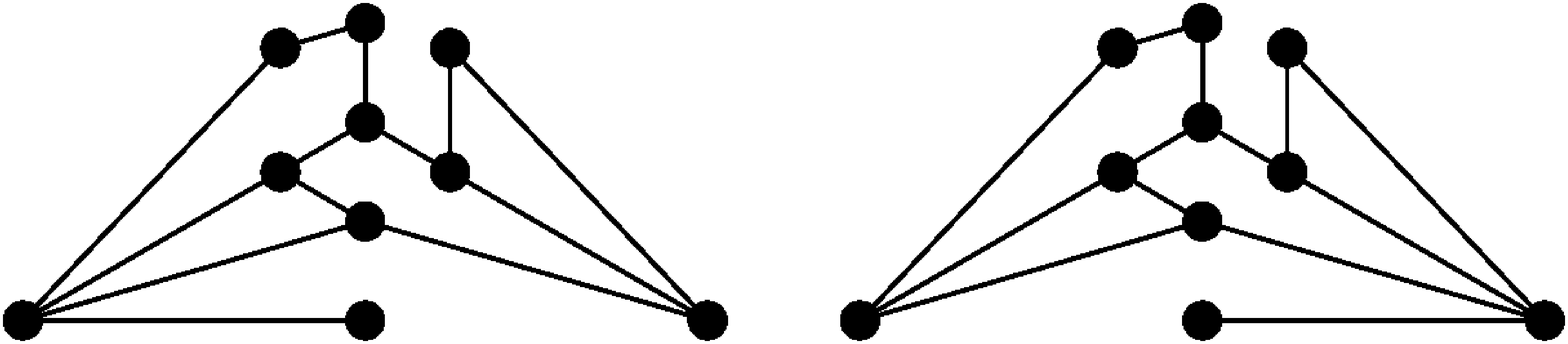}
            \centering
            \caption{}
            \label{shou}
            \end{minipage}
            \end{figure}

            This seems to be a big weak point of $F(G)$. But please remember, we only want to find a polynomial invariant which contain more invariants, especially the number of Hamiltonian cycles.

            But, in other side, we will give a plan, if we want to use $F(G)$ to distinguish graphs.

            $F(G)$ can be equivalently defined on the adjacency matrix $A(G)$ of $G$, which is denoted by $\hat{F}(A(G))$. Therefore, we can have $\hat{F}(Q(G))$ formally, where $Q(G)$ is the signless Laplacian matrix of $G$. Similarly we can have $\hat{C}(\cdot)$ and $\hat{J}(\cdot)$. The example above give a pair of graphs, which have a same $\hat{F}(A(G))$ but different $\hat{C}(Q(G))$.

            Moreover, Seiya Negami and Katsuhiro Ota give $9$ pairs of trees in \cite{tree}, each of which have a same $\hat{F}(A(G))$ and a same $\hat{C}(Q(G))$. But each of them have different $\hat{J}(Q(G))$.

            We hope, for every pair of different graphs, they have different $\hat{F}(Q(G))$. But, pessimistically, we guess, there will be counterexamples in strongly regular graph.

\begin{acknowledgements}
Thank Prof. Sheng Chen and Prof. Xunbo Yin for their help on this article.
\end{acknowledgements}

% BibTeX users please use one of
%\bibliographystyle{spbasic}      % basic style, author-year citations
%\bibliographystyle{spmpsci}      % mathematics and physical sciences
%\bibliographystyle{spphys}       % APS-like style for physics
%\bibliography{}   % name your BibTeX data base

% Non-BibTeX users please use

\end{document}